\newtheorem{theorem}{Theorem}[section]
\newtheorem{lemma}[theorem]{Lemma}
\newtheorem{proposition}[theorem]{Proposition}
\theoremstyle{definition}
\newtheorem{example}[theorem]{Example}
\theoremstyle{remark}
\newtheorem{remark}[theorem]{Remark}
\def \bC{\mathbb C}
\def\bB{\mathbb B}
\def\bH{\mathbb H}
\def\bCP{\mathbb C\mathbb P}
\def\im{\text{{\rm Im }}}
\def\re{\text{{\rm Re }}}
\title[Proper holomorphic mappings into $\ell$-concave quadric domains]{Proper holomorphic mappings into $\ell$-concave quadric domains in projective space}
\author{Peter Ebenfelt}
\address{Department of Mathematics, University of California at San Diego, La Jolla, CA 92093-0112}
\email{pebenfel@math.ucsd.edu}
\thanks{The author was partly supporting by the NSF grant DMS-1301282.}
\begin{document}
\begin{abstract} In this paper, we prove a type of partial rigidity result for proper holomorphic mappings of certain $\ell$-concave domains in projective space into model quadratic $\ell$-concave domains. The main technical result is a degree estimate for proper holomorphic mappings into the model domains, provided that the mappings extend to projective space as rational mappings, and the source domain contains algebraic varieties and has a boundary with low CR complexity.
\end{abstract}
\maketitle

\section{Introduction}

The study of holomorphic mappings (or, more generally, CR mappings) sending a Levi nondegenerate hypersurface $M$ in an $(n+1)$-dimensional complex manifold into
a hyperquadric in $\bCP^{N+1}$, $N\geq n$, has attracted considerable attention since Alexander in 1974 \cite{Alexander74} rediscovered Poincar\`e's classical rigidity
result \cite{Poincare07} for local mappings between spheres. In recent years, much progress has been made on understanding when rigidity
(uniqueness modulo autmorphisms of the hyperquadric) holds in the case where the Levi signatures of $M$ and the target hyperquadric are the equal (see, e.g., \cite{Webster79}, \cite{Faran86}, \cite{Huang99}, \cite{EHZ04}, \cite{EHZ05}, \cite{BH05}, \cite{BEH08}). Even more recently, classification results beyond the rigidity region in the strictly pseudoconvex case have been obtained (\cite{HuangJi01}, \cite{Hamada05}, \cite{HuangJiXu06}, \cite{HuangJiYin09}, \cite{HuangJiYin12}, \cite{E13}; see also \cite{Faran82}). There is also an ongoing effort to understand the complexity of rational mappings (as measured by the degree) between spheres, asymptotically as the codimension of the mappings tend to infinity (see, e.g., \cite{DAngelo88}, \cite{Dangelo91}, \cite{DAngeloKR03}, \cite{JPDLebl09}, \cite{JPDLebl11}, \cite{LeblPeters11}). Common to these results is that the arguments are essentially local, meaning that the they rest mainly upon the fact that the mapping under consideration sends a neighborhood of a point on $M$ into the hyperquadric.
When the signatures of $M$ and the target hyperquadric are different, then it is easy to see that no rigidity can hold for local mappings, but some partial rigidity (meaning that that the image is contained in a projective subspace) persists when the signature difference is small (\cite{EHZ05}, \cite{BEH09}, \cite{ESh12}). In a recent paper by S.-C. Ng \cite{Ng13}, holomorphic mappings between hyperquadrics of different signatures are studied in a {\it global} setting, and it is shown that much stronger conclusions can then be drawn: Namely, a proper holomorphic mapping from the $\ell$-concave quadratic "flag domain" $\bB^{n+1}_\ell$ (see below for the precise definition) into the higher dimensional, $\ell'$-concave quadratic domain $\bB^{N+1}_{\ell'}$ is necessarily linear when $\ell'-\ell<\ell$. This result can also be obtained by a direct application in the global setting (see, for instance, \cite{Huang}) of the normal form obtained in the paper \cite{BEH09} by M. S. Baouendi, X. Huang, and the present author. The proof in \cite{Ng13} is not based on the local normal form in \cite{BEH09}; instead, it is based on global arguments resting on the classical Feder's Theorem, and these arguments seem to have other applications as well (that will not be address in this paper). Also, as the local result in  \cite{BEH09} is quite technical, Ng's proof can be regarded as a much simplified approach in the global setting. As a final remark here, we mention that in an even more recent paper \cite{KimZaitsev13}, S.-Y. Kim and D. Zaitsev studied partial rigidity of proper holomorphic mappings between bounded symmetric domains and obtained a normal form analogous, in the setting of proper maps between bounded symmetric domains, to that in \cite{BEH09}; see also \cite{KimZaitsev12} for local results in this setting.

The present paper was inspired by the result in \cite{Ng13} described above. We shall consider proper holomorphic maps from a more general class of domains into $\bB^{N+1}_{\ell'}$, and prove a result (Theorem \ref{thm:main}) guaranteeing that all possible such maps are rational of a fixed degree. We begin with some notation and definitions. For integers $n\geq 1$ and $0\leq \ell\leq n$, the $\ell$-concave
quadric domain $\bB^{n+1}_\ell\subset \bCP^{n+1}$ is defined in homogeneous coordinates $Z=[Z_0:\ldots:Z_{n+1}]$ by the equation
\begin{equation}\label{e:Bell}
-\sum_{i=0}^\ell |Z_i|^2+\sum_{i=\ell+1}^{n+1}|Z_i|^2<0.
\end{equation}
We note that for $\ell=0$, the $0$-concave quadric domain $\bB^{n+1}_0$ is just the standard unit ball $\bB^{n+1}$ in $\bC^{n+1}\subset \bCP^{n+1}$.
In this paper, we shall, however, always assume that $\ell>0$. Moreover, it is easy to see that for each $\ell>0$ there is an automorphism of $\bCP^{n+1}$
(reordering the coordinates) that maps $\bB^{n+1}_\ell$ to the complement of $\bB^{n+1}_{n-\ell}$. We shall further restrict our attention in this paper to
the case $\ell\leq n/2$; in other words, we shall assume $0<\ell\leq n/2$. The boundary of $\bB^{n+1}_\ell$ is then the standard Levi nondegenerate hyperquadric
of CR dimension $n$ and Levi signature $\ell$, here denoted by $Q^n_\ell$.

We shall consider a more general class of $\ell$-concave domains, defined as follows. Let $m\geq 1$ and $N_0\geq n$ be integers, and
$P_{\ell+1}(\zeta,Z_{\ell+1},\ldots,Z_{n+1}), \ldots, P_{N_0+1}(\zeta,Z_{\ell+1},\ldots,Z_{n+1})$ homogeneous polynomials of degree $m$ such that
\begin{equation}\label{e:P=0}
P_k(\zeta,0,\ldots,0)\equiv 0,\quad k=\ell+1,\ldots, N_0+1.
\end{equation}
Consider the domain
$\Omega\subset\bCP^{n+1}$ defined by
\begin{equation}\label{e:Pell}
-\sum_{i=0}^\ell |Z_i|^{2m}+\sum_{k=\ell+1}^{N_0+1}|P_k(Z_{0},Z_{\ell+1},\ldots,Z_{n+1})|^2<0.
\end{equation}
We note that $\Omega$ is the interior of the closure in $\bCP^{n+1}$ of the domain $\Omega_0\subset \bC^{n+1}$ defined in affine coordinates $z=(Z_1/Z_0,\ldots,Z_{n+1}/Z_0)$ by
\begin{equation}\label{e:Pell0}
-\sum_{i=1}^\ell |z_i|^{2m}+\sum_{k=\ell+1}^{N_0+1}|Q_k(z_{\ell+1},\ldots,z_{n+1})|^2<1,
\end{equation}
where
\begin{equation}\label{e:Q}
Q_k(z_{\ell+1},\ldots,z_{n+1}):=
P_k(1,z_{\ell+1},\ldots,z_{n+1}).
\end{equation}
We shall impose the following additional condition on the polynomials $P_k$ (via the affine polynomials $Q_k$ defined by \eqref{e:Q}):
\medskip

\noindent
{\bf (I)} The rank of the $(N_0+1-\ell)\times (n+1-\ell)$ Jacobian matrix
\begin{equation}\label{e:restJacP}
\left(\frac{\partial Q_k}{\partial z_j}\right)_{k=\ell+1,\ldots,N_0+1,\ j=\ell+1,\ldots, n+1}
\end{equation}
is $n+1-\ell$ at some point in $\bC^{n+1}$ (and hence outside an algebraic variety).

\medskip

\begin{example}\label{ex:ex1} We note that $\Omega=\bB^{n+1}_\ell$ when $N_0=n$, $m=1$, and $$P_k(\zeta,Z_{\ell+1},\ldots,Z_{n+1}):=Z_k.$$
Clearly, this satisfies \eqref{e:P=0} and {\bf (I)} above. A slightly more general example is obtained when we still choose $N_0=n$, leave $m\geq 1$ arbitrary, and choose $$P_k(\zeta,Z_{\ell+1},\ldots,Z_{n+1}):=\zeta^{m-p_k}Z_k^{p_k},$$ for some positive integers $1\leq p_k\leq m$. In affine coordinates, the domain $\Omega_0\subset \bC^{n+1}$ is then the "pseudohyperboloid" given by
\begin{equation}\label{e:psellipse}
-\sum_{i=1}^\ell |z_i|^{2m}+\sum_{k=\ell+1}^{n+1}|z_{k}|^{2p_k}<1,
\end{equation}
and, again, conditions \eqref{e:P=0} and {\bf (I)} are clearly satisfied.
\end{example}

\begin{remark} {\rm We should point out that if $N_0=n$, then condition {\bf (I)} implies that $\partial \Omega$ is {\it locally} biholomorphic to $Q^{n}_{\ell}$ outside a proper real-algebraic subvariety in $\partial\Omega$. However, we want to emphasize that the considerations in this paper are {\it global} and cannot be reduced to the local situation.
}
\end{remark}

The main result in this paper is the following.

\begin{theorem}\label{thm:main} Let $n\geq1$, $1\leq \ell\leq n/2$, $m\geq 1$ and $N_0\geq n$ be integers as above. For $k=\ell+1,\ldots,N_0+1$, let  $P_{k}(\zeta,Z_{\ell+1},\ldots,Z_{n+1})$ be homogeneous polynomials of degree $m$, and assume that \eqref{e:P=0} and {\bf (I)} above hold. Let $\Omega\subset\bCP^{n+1}$ be defined by \eqref{e:Pell} and $H\colon \Omega\to \bB^{N+1}_{\ell'}$, for some $N\geq n$ and $\ell\leq \ell'\leq N/2$, a proper holomorphic mapping. Then $H$ is rational, and
if
\begin{equation}\label{e:signdiff}
(\ell'-\ell)+(N_0-n)<\ell,
\end{equation}
then $\deg H=m$.
\end{theorem}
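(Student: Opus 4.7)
My plan proceeds in three stages.

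\textbf{Stage 1: Rational extension.} First I would establish that $H$ extends to a rational map $\widetilde H \colon \bCP^{n+1} \dashrightarrow \bCP^{N+1}$. The boundary $\partial \Omega$ is real algebraic; by the Remark following condition~{\bf (I)}, its smooth part is locally biholomorphic to $Q^n_\ell$ outside a proper real-algebraic subvariety. Combined with the real algebraicity of the target boundary $Q^N_{\ell'}$, an algebraicity theorem in the spirit of Forstneri\v c or Huang extends $H$ across $\partial \Omega$ as a holomorphic algebraic map; since meromorphic mappings of $\bCP^{n+1}$ into projective space are automatically rational, $\widetilde H$ follows. Write $\widetilde H = [H_0 : \cdots : H_{N+1}]$ with $H_j$ homogeneous of some common degree $d$ and $\gcd(H_0,\ldots,H_{N+1})=1$.

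\textbf{Stage 2: Reflection identity.} By properness, $\widetilde H$ sends $\partial \Omega$ into $Q^N_{\ell'}$. Combined with the irreducibility of the real-polynomial $g(Z,\bar Z) := -\sum_{i=0}^\ell |Z_i|^{2m} + \sum_{k=\ell+1}^{N_0+1}|P_k(Z)|^2$ defining $\partial \Omega$ (which follows from~{\bf (I)} and the structure of the $P_k$), a real-algebraic divisibility argument yields a bihomogeneous real polynomial $\mu(Z,\bar Z)$ of bidegree $(d-m,d-m)$ with
\begin{equation*}
-\sum_{j=0}^{\ell'}|H_j(Z)|^2 + \sum_{j=\ell'+1}^{N+1}|H_j(Z)|^2 \;=\; \mu(Z, \bar Z)\, g(Z, \bar Z).
\end{equation*}

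\textbf{Stage 3: Compact projective subvarieties of $\Omega$ and the degree bound.} The key geometric input is that $\Omega$ contains a large family of $\ell$-dimensional projective subspaces. By~\eqref{e:P=0}, the subspace $L := \{[Z_0 : Z_1 : \cdots : Z_\ell : 0 : \cdots : 0]\} \cong \bCP^\ell$ lies in $\Omega$, and more generally for each $w = (w_{\ell+1},\ldots,w_{n+1})$ in the open set $\{w : \sum_k|Q_k(w)|^2<1\}$ the translate $L_w := \{[Z_0 : Z_1 : \cdots : Z_\ell : w_{\ell+1}Z_0 : \cdots : w_{n+1}Z_0]\}$ is contained in $\Omega$. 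The restriction $\widetilde H|_{L_w}$ is a holomorphic mapping $\bCP^\ell \to \bB^{N+1}_{\ell'}$ cut out by polynomials of degree at most $d$ in $(Z_0,\ldots,Z_\ell)$. Specializing the identity of Stage~2 to each $L_w$ and varying $w$, I would combine a Feder-type classification of holomorphic maps $\bCP^\ell \to \bCP^{N+1}$ whose image lies in $\bB^{N+1}_{\ell'}$ together with a Hermitian signature and rank analysis of the Stage~2 identity. The hypothesis $(\ell'-\ell)+(N_0-n)<\ell$ enters precisely as a bound on the "negative signature excess" permitted on the left-hand side relative to the $\ell+1$ negative directions naturally present on the right; together with the bihomogeneity of $\mu$, this forces $\mu$ to be a nonzero constant, giving $d=m$.

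The main obstacle I expect is Stage~3: controlling the holomorphic family of restrictions $\widetilde H|_{L_w}$ to pin down their image geometry inside $\bB^{N+1}_{\ell'}$, and converting a Hermitian signature inequality into the precise polynomial-degree equality $d=m$. This is the nonlinear analogue, appropriate to the domain $\Omega$ of~\eqref{e:Pell} whose defining polynomial has degree $2m$ and involves the $N_0+1-\ell$ auxiliary polynomials $P_k$, of the global Feder-theorem argument that Ng uses in~\cite{Ng13} for the case $m=1$, $N_0=n$.
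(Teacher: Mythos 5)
Your proposal has genuine gaps at both ends. In Stage~1, the algebraicity theorems of Forstneri\v c/Huang type require the map to extend with some regularity (at least continuously, typically smoothly) to the boundary before the reflection principle can be applied, and proper holomorphic maps between these non-pseudoconvex domains do not come with boundary regularity for free; moreover, local algebraicity near a boundary point does not by itself yield a single-valued rational extension to all of $\bCP^{n+1}$. The paper avoids both issues entirely: it uses the $\ell$-concavity ($\ell\geq 1$) to show, via a Hartogs-type argument, that in each chart $U_k$, $k=\ell+1,\ldots,n+1$, the complement of $\Omega$ is compact in the $(\ell+1)$-dimensional slices, so \emph{every} holomorphic function on $\Omega\cap U_k$ extends to all of $U_k\cong\bC^{n+1}$; combined with the covering $\Omega^c\subset\bigcup_{k\geq\ell+1}U_k$ this yields a meromorphic, hence rational, extension with no boundary regularity needed. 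Your Stage~2 identity is plausible in outline, but the irreducibility of $g$ is asserted rather than proved, and the real zero set of $g$ is in general strictly larger than $\partial\Omega$ (only a piece of it is the smooth hypersurface $M$), so the divisibility argument needs care.

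The fatal problem is Stage~3. For $m\geq 2$ the restrictions $\widetilde H|_{L_w}$ cannot be linear: the model map \eqref{e:H^0} restricted to $L_w$ is $[Z_0^m:\cdots:Z_\ell^m:Z_0^mQ_{\ell+1}(w):\cdots]$, a genuinely degree-$m$ (and non-injective) map of $\bCP^\ell$ into an $\ell$-plane. So a Feder-type linearity classification is structurally incompatible with the conclusion $\deg H=m>1$ (and Feder's theorem requires an embedding hypothesis that these maps violate), while the ``Hermitian signature and rank analysis'' that is supposed to force $\mu$ to be a nonzero constant is never carried out --- this is precisely the content of the theorem, not a routine verification. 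The paper's actual mechanism is different: after establishing CR transversality of $H$ along $M$ (Theorem 1.1 of \cite{BER07}), the partial rigidity theorem of \cite{ESh12} confines the image to a projective plane of dimension $N_0+(\ell'-\ell)+1$; the normal form of \cite{EHZ05} then expresses the reduced map $H'$ in terms of the explicit degree-$m$ map $H^0$ of \eqref{e:H^0} up to a unitary factor and extra components $\psi$; and finally the $\ell$-dimensional linear variety $X\subset\Omega$ is used twice to bound $\deg\psi$, since an indeterminacy locus of codimension at most $\ell''+r-\ell+1\leq\ell$ would have to meet $X$, contradicting holomorphy of $H$ on $\Omega$. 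None of these three ingredients (transversality, partial rigidity, the normal-form comparison with $H^0$) appears in your plan, and without them the degree identity $\deg H=m$ is not reached.
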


\begin{remark} {\rm As mentioned in Example \ref{ex:ex1} above, if we take $N_0=n$, and $$P_k(\zeta,Z_{\ell+1},\ldots,Z_{n+1})=Z_k$$ (i.e., $m=1$), then Theorem \ref{thm:main} states that any proper holomorphic mapping $H\colon \bB^{n+1}_{\ell}\to \bB^{N+1}_{\ell'}$, with $\ell'-\ell<\ell$, is rational of degree 1, i.e., {\it linear}; this is the result in the recent paper \cite{Ng13}. The author wishes to reiterate that the work reported on in this paper was inspired by \cite{Ng13}, and the proof that $H$ is rational in Theorem \ref{thm:main} is borrowed from that paper. As mentioned above, the main ideas in \cite{Ng13} and the present paper are different, though. The idea in \cite{Ng13} hinges on Feder's Theorem regarding rational maps between certain projective spaces being linear, whereas the idea in this paper is to use a normal form for local holomorphic mappings into hyperquadrics proved in \cite{EHZ05} (resulting in Proposition \ref{prop:main}). Thus, the proof given here also provides an alternative proof of the result in \cite{Ng13}.
}
\end{remark}

The proof of Theorem \ref{thm:main} has two main parts, (1) proving that $H$ is rational (Lemma \ref{lem:main} in Section \ref{s:mainlem} below; as mentioned above, the main idea in this proof is in \cite{Ng13}), and (2) proving Proposition \ref{prop:main} below from which the conclusion of Theorem \ref{thm:main} then follows. Before stating the proposition, we shall need to introduce some terminology. If $M\subset \bC^{n+1}$ is a real-analytic Levi nondegenerate hypersurface of Levi signature $\ell\leq n/2$, then for every $p\in$ there are local holomorphic coordinates $(z,w)=(z_1,\ldots,z_n,w)\in \bC^n\times \bC$ vanishing at $p$ such that in these coordinates $M$ is given by an equation of the form
\begin{equation}\label{e:Imw}
\im w=-\sum_{i=1}^\ell |z_i|^2+\sum_{i=\ell+1}^n|z_i|^2+A(z,\bar z,\re w),
\end{equation}
where $A(z,\bar z,\re w)$ is a real-analytic function that vanishes to weighted order ($z$ has order 1 and $w$ has order 2) at least 4. A domain $\Omega$ with $M$ as part of its boundary is said to be $\ell$-concave with respect to $M$ if every point $p\in M$ has an open neighborhood $U$ such that in the coordinates $(z,w)$ it holds that $\Omega\cap U$ is contained
\begin{equation}\label{e:Imw>}
\im w>-\sum_{i=1}^\ell |z_i|^2+\sum_{i=\ell+1}^n|z_i|^2+A(z,\bar z,\re w).
\end{equation}
This is equivalent to saying that if we use $\theta:=i\partial\rho$, with
\begin{equation}\label{e:Imwrho}
\rho:=-\im w-\sum_{i=1}^\ell |z_i|^2+\sum_{i=\ell+1}^n|z_i|^2+A(z,\bar z,\re w),
\end{equation}
as a contact form on $M$, then the Levi form with respect to $\theta$ has $\ell$ negative and $n-\ell$ positive eigenvalues. (We note that if $\ell=n/2$, then both sides of $M$ are $\ell$-concave with respect to $M$.)

We are now ready to state the main new technical result in this paper. The result summarizes the properties of $\Omega$ that are required for the conclusion of Theorem \ref{thm:main} to hold, once the rationality of $H$ has been established. This result should also be of independent interest as it can be applied in other situations as well.

\begin{proposition}\label{prop:main} Let $\Omega\subset\bCP^{n+1}$ be a domain satisfying the following three conditions:
\smallskip

\noindent{\rm (i)} There exists an $\ell$-dimensional algebraic variety $X$ contained in $\Omega$.
\smallskip

\noindent{\rm (ii)} A relatively open piece of the boundary $\partial\Omega$ is a real-analytic Levi nondegenerate hypersurface $M$ of signature $\ell$, and $\Omega$ is $\ell$-concave with respect to $M$.
\smallskip

\noindent{\rm (iii)} There exists an open neighborhood $U$ of $M$ and a nonconstant holomorphic mapping $H^0\colon U\subset \bCP^{N_0+1}$ such that $H^0(M)\subset Q^{N_0}_\ell$.
\smallskip

\noindent
If $H\colon \Omega\to \bB^{N+1}_{\ell'}$ is a proper holomorphic mapping that extends as a rational mapping $\bCP^{n+1}\to \bCP^{N+1}$, and $(\ell'-\ell)+(N_0-n)<\ell$, then $H_0$ is also rational and $\deg H=\deg H_0$.
\end{proposition}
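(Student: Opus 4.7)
The plan is to apply the normal form for local holomorphic mappings into Levi-nondegenerate hyperquadrics from \cite{EHZ05} — the tool the author highlights as replacing Feder's theorem from Ng's argument — separately to $H|_M$ and $H^0|_M$. The normal form should force both maps to factor through linear embeddings of a common low-dimensional model, after which the two ``reduced'' factor maps will differ only by a projective linear automorphism; from this I expect to transfer rationality from $H$ to $H^0$ and match their degrees.

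First I would show that $H$ extends holomorphically across a dense open subset of $M$ (the Levi-nondegeneracy of $M$ together with rationality of $H$ gives this), so that at a generic $p \in M$ both $H$ and $H^0$ are holomorphic in a common neighborhood. I would then pick local coordinates $(z,w)$ at $p$ bringing $M$ into the form \eqref{e:Imw} and normalize the two CR maps $H|_M : M \to Q^N_{\ell'}$ and $H^0|_M : M \to Q^{N_0}_\ell$ via \cite{EHZ05}. The hypothesis $(\ell'-\ell)+(N_0-n)<\ell$ yields the individual inequalities $\ell'-\ell<\ell$ and $N_0-n<\ell$, and in both cases the normal form should then conclude that the image lies in an $(n+1)$-dimensional complex linear subspace: $L_1 \subset \bCP^{N+1}$ cutting out a copy of $\bB^{n+1}_\ell$ inside $\bB^{N+1}_{\ell'}$, and $L_2 \subset \bCP^{N_0+1}$ cutting out a copy of $Q^n_\ell$ inside $Q^{N_0}_\ell$. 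I would then factor $H = \iota_1 \circ \tilde H$ and $H^0 = \iota_2 \circ \tilde H^0$ for the obvious linear inclusions, so that $\tilde H : \Omega \to \bB^{n+1}_\ell$ is proper rational of the same degree as $H$, and $\tilde H^0$ is holomorphic into a neighborhood of $Q^n_\ell \subset \bCP^{n+1}$ with $\tilde H^0(M) \subset Q^n_\ell$.

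Next I would compare $\tilde H$ and $\tilde H^0$. Both restrict to CR maps from $M$ into $Q^n_\ell$, with source and target having the same CR dimension $n$ and signature $\ell$; properness of $H$ and nonconstancy of $H^0$ ensure that neither is degenerate everywhere on $M$, so at a generic $q \in M$ both are local CR diffeomorphisms. By the classical local rigidity of $Q^n_\ell$ (its germ of CR automorphisms is exactly the restriction of $\mathrm{PU}(n+1,\ell+1)$), there is a projective linear $\phi$ with $\tilde H = \phi \circ \tilde H^0$ on a CR-open piece of $M$, and real-analytic continuation will propagate this to $\Omega \cap U$. Since $\tilde H$ is rational on $\bCP^{n+1}$, the identity $\tilde H^0 = \phi^{-1}\circ \tilde H$ then forces $\tilde H^0$, and hence $H^0 = \iota_2 \circ \tilde H^0$, to be rational on all of $\bCP^{n+1}$; because $\iota_1, \iota_2, \phi$ are all linear, the degree equality $\deg H = \deg H^0$ follows immediately. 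The algebraic variety $X \subset \Omega$ enters to guarantee the nondegeneracy needed above: since $H$ is rational and proper, $H(X)$ is an $\ell$-dimensional subvariety of $\bB^{N+1}_{\ell'}$, which prevents $\tilde H$ from dropping rank and ensures the normal form really does produce the linear subspaces $L_1, L_2$ as claimed. The hard part will be checking that the local normal form of \cite{EHZ05} globalizes consistently along $M$ — i.e., that $L_1$ and $L_2$ are independent of the base point $p$ — and then propagating the identity $\tilde H = \phi \circ \tilde H^0$ through the indeterminacy locus of the rational extension of $H$ on all of $\Omega$.
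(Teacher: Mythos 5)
There is a genuine gap, and it is in the central reduction. You claim that because $\ell'-\ell<\ell$ and $N_0-n<\ell$, the normal form forces the images of $H$ and $H^0$ into \emph{$(n+1)$-dimensional} linear subspaces cutting out copies of $\bB^{n+1}_\ell$ and $Q^n_\ell$. This is not what the partial rigidity theorems give. The result actually used in the paper (Theorem 1.2(a) of \cite{ESh12}, after a transversality step via \cite{BER07}) only places $H(U)$ in a projective plane of dimension $d+1=N_0+(\ell'-\ell)+1$, which strictly exceeds $n+1$ whenever $N_0>n$ or $\ell'>\ell$; moreover the restriction of the ambient Hermitian form to that plane may be degenerate, with signature $\ell''$ not equal to $\ell$, which is why the paper must embed the plane into a larger $\bCP^{d+r+1}$ carrying a nondegenerate form. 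Your subsequent step is then unavailable: since $M$ in general has positive CR complexity ($N_0>n$), it is \emph{not} locally CR equivalent to $Q^n_\ell$, so there is no factorization $\tilde H^0\colon M\to Q^n_\ell$ that is a local CR diffeomorphism, and the Chern--Moser/Tanaka rigidity of $Q^n_\ell$ cannot be invoked to produce the projective linear $\phi$ with $\tilde H=\phi\circ\tilde H^0$. (Nonconstancy of $H^0$ does not yield a generic local diffeomorphism either.) The case $N_0=n$, $\ell'=\ell$ where your picture is accurate is exactly the case already covered by \cite{Ng13}.

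The paper's actual mechanism is different and you would need its two ingredients. First, both $H'$ (the map recomposed into the nondegenerate quadric $\bH^{d+r}_{\ell''+r}$) and $H^0$ are put into the normal form of Theorem 5.3 of \cite{EHZ05} \emph{simultaneously, in the same coordinates on $M$}; the comparison is not a composition identity but the Hermitian relation \eqref{e:normform2} between the second-order parts $\phi'$ and $\phi^0$, which produces a unitary matrix tying $\phi^0$ to components of $\phi'$ and hence transfers rationality to $H^0$. Second, the degree equality is not automatic from linearity of the ambient maps: it is proved by a denominator/indeterminacy argument in which the hypothesis (i) is essential --- if the degrees disagreed, the rational map $T'_2\circ T'_1\circ H'$ would acquire an indeterminacy set of codimension at most $\ell''+r-\ell+1\leq\ell'-\ell+1\leq\ell$ in $\bCP^{n+1}$, which must meet the $\ell$-dimensional variety $X\subset\Omega$, contradicting holomorphy of $H$ on $\Omega$. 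In your proposal $X$ appears only as a vague nondegeneracy device, and no substitute for this counting argument is offered, so even granting your (incorrect) linear reduction, the degree comparison is not established.
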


\begin{remark}\label{rem:mu} {\rm The minimum of $N_0-n$ for which there is a mapping $H^0$ as in Proposition \ref{prop:main} is sometimes referred to as the CR complexity of $M$, often denoted $\mu(M)$.
}
\end{remark}

The proof of this proposition, as well as the proof of Theorem \ref{thm:main}, is given below in Section \ref{s:proofs}.

\section{Two Lemmas}\label{s:mainlem}

In this section, we shall prove that the mapping $H$ in Theorem \ref{thm:main} is rational, and also that $\Omega$ in this theorem satisfies the condition (ii) in Proposition \ref{prop:main}. We shall formulate these results as two separate lemmas. We first prove:

\begin{lemma}\label{lem:main} Let $n\geq1$, $1\leq \ell\leq n/2$, $m\geq 1$ and $N_0\geq n$ be integers. For $k=\ell+1,\ldots,N_0+1$, let  $P_{k}(\zeta,Z_{\ell+1},\ldots,Z_{n+1})$ be homogeneous polynomials of degree $m$, and assume that \eqref{e:P=0} and {\bf (I)} above hold. Let $\Omega\subset\bCP^{n+1}$ be defined by \eqref{e:Pell} and $H\colon \Omega\to \bB^{N+1}_{\ell'}$, for some $N\geq n$ and $\ell\leq \ell'\leq N/2$, a proper holomorphic mapping. Then $H$ is rational.
\end{lemma}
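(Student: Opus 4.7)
The strategy, borrowed from \cite{Ng13}, is to combine a boundary extension argument with the algebraicity of CR mappings sending a real-algebraic Levi nondegenerate hypersurface into a hyperquadric. The proof proceeds in three steps: (1) extend $H$ holomorphically across a dense open piece of $\partial\Omega$; (2) conclude from the resulting local CR map into $Q^N_{\ell'}$ that $H$ is algebraic near the boundary; and (3) analytically continue the algebraic germ to a global rational mapping.

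For step~(1), condition \textbf{(I)} guarantees that the real-algebraic set $\partial\Omega$ is a smooth, Levi nondegenerate hypersurface of signature $\ell$ outside a proper real-algebraic subvariety $\Sigma\subset\partial\Omega$; set $M_0:=\partial\Omega\setminus\Sigma$. Properness of $H$ into $\bB^{N+1}_{\ell'}$ forces the cluster set of $H$ at any $p\in M_0$ to be a nonempty compact subset of $Q^N_{\ell'}$. At a generic point $p\in M_0$, one exploits the mixed Levi signature of $M_0$ together with the real-algebraic nature of the target quadric to carry out an edge-of-the-wedge or Tumanov-type extension; the upshot is that $H$ extends holomorphically to a two-sided neighborhood $U$ of $p$ with $H(M_0\cap U)\subset Q^N_{\ell'}$.

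For step~(2), near $p$ we have a germ of holomorphic map sending a real-algebraic Levi nondegenerate hypersurface of signature $\ell$ into a hyperquadric of signature $\ell'$. The Segre varieties of $Q^N_{\ell'}$ are affine hyperplanes, so pulling them back via $H$ yields polynomial relations between the components of $H$ and their conjugates. A Webster-type algebraicity argument (or the specific Segre-variety/Feder-type argument for hyperquadric targets used in \cite{Ng13}) then forces the components of $H$ to be algebraic functions; equivalently, the graph of $H$ at $p$ lies in an irreducible complex algebraic variety $V\subset\bCP^{n+1}\times\bCP^{N+1}$. In step~(3), analytic continuation of this algebraic germ along paths in the connected $\Omega$ selects a single sheet of $V$ over $\Omega$, which closes up to a rational mapping $\tilde H\colon \bCP^{n+1}\to\bCP^{N+1}$ agreeing with $H$ on $\Omega$ by the identity principle.

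The principal obstacle is the boundary extension in step~(1): because $\Omega$ is $\ell$-concave rather than pseudoconvex, the classical reflection and extension results of Bell, Diederich--Fornaess, and others are not directly available. One must instead use the presence of both positive and negative Levi directions at $M_0$ (which allow propagation of CR extension from both sides of $M_0$), combined with the real-algebraicity of both boundaries. This is precisely where the global argument of \cite{Ng13} enters; once the two-sided extension is in hand, the algebraicity and rationality conclusions follow along essentially standard lines.
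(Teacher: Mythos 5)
There is a genuine gap, and it is located exactly where you flag ``the principal obstacle,'' but it is worse than a missing extension lemma: the core of your step~(2) is false in the generality of Lemma~\ref{lem:main}. The lemma makes no assumption of the form \eqref{e:signdiff}; it allows any $\ell\leq\ell'\leq N/2$. But a germ of a holomorphic map sending a real-algebraic Levi nondegenerate hypersurface of signature $\ell$ into $Q^N_{\ell'}$ with $\ell'>\ell$ need \emph{not} be algebraic. For instance, for any holomorphic function $f$ the map $(z,w)\mapsto (z,f(z,w),f(z,w),w)$ sends $\im w=|z|^2$ into the hyperquadric $\im w=|z_1|^2+|z_2|^2-|z_3|^2$, because the contributions $|f|^2-|f|^2$ cancel; the same cancellation mechanism produces non-algebraic local maps whenever the target has excess eigenvalues of both signs relative to the source. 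So no Webster/Segre-variety argument applied to the local CR map $M_0\to Q^N_{\ell'}$ can yield algebraicity here, and consequently step~(3) has nothing to continue. Your step~(1) is also only asserted, not proved; it happens to be salvageable at generic boundary points (since $1\leq\ell\leq n/2$ the Levi form of $M_0$ has eigenvalues of both signs, so the Lewy extension phenomenon extends holomorphic functions on $\Omega$ across $M_0$), but you do not give that argument, and in any case attributing a two-sided boundary extension to ``the global argument of \cite{Ng13}'' mischaracterizes that argument.

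The paper's proof (following \cite{Ng13}) does not touch the boundary at all and uses no CR reflection or algebraicity machinery. By \eqref{e:P=0}, any point of $\Omega^c$ has some coordinate $Z_k\neq 0$ with $\ell+1\leq k\leq n+1$, so $\Omega^c$ is covered by the affine charts $U_{\ell+1},\dots,U_{n+1}$. In each such chart one decomposes the coordinates as $(\zeta,\tau)\in\bC^{\ell+1}\times\bC^{n-\ell}$ and observes, again using \eqref{e:P=0} to control the degrees of the restricted polynomials, that for each fixed $\tau$ the complement of the slice $\Omega_{\tau}$ in $\bC^{\ell+1}$ is compact and $\Omega_{\tau}$ is connected. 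Hartogs' extension theorem (here $\ell\geq 1$ is essential) then extends every holomorphic function on the slice to all of $\bC^{\ell+1}$, and a Hartogs lemma with parameters upgrades this to extension of every holomorphic function on $\Omega\cap U_k$ to all of $U_k\cong\bC^{n+1}$. Hence $H$ extends to a meromorphic map on all of $\bCP^{n+1}$, which is automatically rational. If you want to repair your write-up, you should replace steps (2)--(3) entirely with this Hartogs filling argument; the local-to-algebraic route cannot work without the hypothesis \eqref{e:signdiff}, which is deliberately not assumed in this lemma.
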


\begin{proof} As mentioned above, this proof is directly adapted from \cite{Ng13}. Let us denote by $U_k\subset\bCP^{n+1}$, for $k=0,\ldots, n+1$, the coordinate chart where the homogeneous coordinate $Z_k\neq 0$. We note that if $p_0=[Z^0_0:\ldots:Z^0_{n+1}]$ belongs to the complement $\Omega^c$ of $\Omega$, then
\begin{equation}
\sum_{i=0}^\ell |Z^0_i|^{2m}\leq\sum_{k=\ell+1}^{N_0+1}|P_k(Z^0_{0},Z^0_{\ell+1},
\ldots,Z^0_{n+1})|^2,
\end{equation}
and, hence, by \eqref{e:P=0} we must have $Z^0_{k}\neq 0$ for some $k=\ell+1,\ldots, n+1$. In particular, $p_0\in U_{k}$, and we conclude that
\begin{equation}\label{e:cover}
\Omega^c\subset\bigcup_{k=\ell+1}^{n+1} U_k.
\end{equation}
For each fixed $k=\ell+1,\ldots n+1$, let $z=(z_1,\ldots,z_{n+1})$, with $z_j=Z_{j-1}/Z_k$ for $j<k$ and $z_j=Z_{j}/Z_k$ for $j>k$, be affine coordinates in $U_k\cong \bC^{n+1}$, and note that the holomorphic mapping $H\colon \Omega\to \bB^{N+1}_{\ell'}$ can be expressed in $\Omega\cap U_k$ as $H^k(z)=[H^k_0(z):\ldots:H^k_{N+1}(z)]$, where the $H^k_j$ are holomorphic functions (without common zeros) in $\Omega\cap U_k$. We shall prove that, for each such $k$, every holomorphic function $f(z)$ in $\Omega\cap U_k$ can be extended as a holomorphic function in all of $U_k\cong \bC^{n+1}$. This implies that each $H^k$ extends as a rational map on $U_k$, and the obvious fact that $\Omega\cap U_k\cap U_{k'}$ is nonempty, for $k,k'\in \{\ell+1,\ldots, n+1\}$, combined with \eqref{e:cover}, then implies that $H$ extends to $\bCP^{n+1}$ as a well defined rational mapping, as claimed in Lemma \ref{lem:main}.

Let us fix $k_0\in \{\ell+1,\ldots,n+1\}$ and let $f(z)$ be a holomorphic function in $\Omega\cap U_{k_0}$, given in the affine coordinates $z=(z_1,\ldots,z_{n+1})\in \bC^{n+1}$ described above by
\begin{equation}\label{e:OmegaUk}
-\sum_{i=1}^{\ell+1} |z_i|^{2m}+\sum_{k=\ell+1}^{N_0+1}|R_k(z_{1},z_{\ell+2},\ldots,z_{n+1})|^2<0,
\end{equation}
where
\begin{equation}\label{e:R}
R_k(\zeta,z_{\ell+2},\ldots,z_{n+1}):=P_k(\zeta,z_{\ell+2},\ldots, z_{k_0},1,z_{k_0+1},\ldots,z_{n+1}).
\end{equation}
Here, the $1$ that appears as an argument in $P_k$ is in the place where $Z_{k_0}$ should appear. (Recall also that there is a shift of indices below $k_0$.) Let us decompose the coordinates as follows $z=(\zeta,\tau)\in \bC^{\ell+1}\times\bC^{n-\ell}$ and then rewrite \eqref{e:OmegaUk} as follows
\begin{equation}\label{e:OmegaUk2}
\sum_{i=1}^{\ell+1} |\zeta_i|^{2m}-\sum_{k=\ell+1}^{N_0+1}|R_k(\zeta_{1},\tau)|^2>0,
\end{equation}
Let us fix $\tau=\tau_0\in \bC^{n-\ell}$ and note that for this fixed value the slice $\Omega_{\tau_0}$ of $\Omega\cap U_{k_0}$ in $\bC^{\ell+1}\cong \{(\zeta,\tau)\colon \tau=\tau_0\}$ is given by an equation of the form
\begin{equation}\label{e:OmegaUkslice}
|\zeta_1|^{2m}-\sum_{k=\ell+1}^{N_0+1}|S_k(\zeta_{1})|^2+\sum_{i=2}^{\ell+1} |\zeta_i|^{2m}>0,
\end{equation}
where the single variable polynomials $S_k$ are given by $S_k(t):=R_k(t,\tau_0)$. Observe that by condition \eqref{e:P=0} each $S_k$ has degree at most $m-1$. It follows that the complement of $\Omega_{\tau_0}$ in $\bC^{\ell+1}$ is contained in a large polydisk $\{\zeta\colon |\zeta_i|<r_i,\ i=1,\ldots, \ell+1\}$ and, hence, compact. Moreover, we claim that $\Omega_{\tau_0}$ is connected. To see this, we decompose $\zeta=(x,y)\in \bC\times\bC^\ell$ and note that for fixed $x=x_0$ the $\ell$-dimensional slice in $\bC^\ell\cong \{(x,y)\colon x=x_0\}$ of $\Omega_{\tau_0}$ is of the form
\begin{equation}\label{e:lemncomp}
\sum_{j=1}^\ell|y_j|^{2m}>C_0,
\end{equation}
where $C_0$ is a constant (possibly negative, which means that the slice is all of $\bC$). Clearly this set is connected and non-empty for every $C_0$. Since $C_0$ moreover varies continuously with $x_0$, we deduce that $\Omega_{\tau_0}$ is connected. Since $f_{\tau_0}:=f(\cdot,\tau_0)$ is holomorphic in $\Omega_{\tau_0}$, we conclude by Hartogs' theorem (see Theorem 2.3.2 in \cite{Hormander90}) that $f_{\tau_0}$ extends holomorphically to $\bC^{\ell+1}$ (recall $\ell\geq 1$). Next, we observe that if $V$ is any open, bounded subset in $\bC^{n-\ell}$, then the coefficients of $S_k$ as functions of $\tau_0$ as $\tau_0$ varies over $V$ are uniformly bounded and, hence, there is an $r>0$ (depending on the bounded set $V$) such that $\Omega$ contains the open subset $\{\zeta\colon |\zeta_i|>r,\ i=1,\ldots, \ell+1\}\times V$. Since $f_{\tau_0}=f(\cdot,\tau_0)$ extends holomorphically to $\bC^{\ell+1}$ for each fixed $\tau_0$, we conclude  (cf. Lemma 2.2.11 in \cite{Hormander90}) that $f(\zeta,\tau)$ extends holomorphically to $\bC^{\ell+1}\times V$. Since $V$ is an arbitrary bounded open subset, we conclude that $f(\zeta,\tau)$ extends holomorphically to $\bC^{\ell+1}\times\bC^{n-\ell}$, which proves the claim above and, therefore, concludes the proof of Lemma \ref{lem:main}.
\end{proof}

We conclude this section by proving the following lemma:

\begin{lemma}\label{lem:M} Let $n\geq1$, $1\leq \ell\leq n/2$, $m\geq 1$ and $N_0\geq n$ be integers. For $k=\ell+1,\ldots,N_0+1$, let  $P_{k}(\zeta,Z_{\ell+1},\ldots,Z_{n+1})$ be homogeneous polynomials of degree $m$, and assume that \eqref{e:P=0} and {\bf (I)} above hold. If $\Omega\subset\bCP^{n+1}$ is defined by \eqref{e:Pell}, then a relatively open subset of the boundary $\partial\Omega$ is a real-algebraic, Levi nondegenerate hypersurface of Levi signature $\ell$. Moreover, $\Omega$ is $\ell$-concave with respect to $M$.
\end{lemma}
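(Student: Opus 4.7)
The plan is to work in the affine chart $U_0\subset\bCP^{n+1}$ where $Z_0\ne 0$. There $\Omega\cap U_0=\{\rho<0\}$ with the real polynomial
\[
\rho(z)=-\sum_{i=1}^{\ell}|z_i|^{2m}+\sum_{k=\ell+1}^{N_0+1}|Q_k(z')|^2-1,\qquad z'=(z_{\ell+1},\dots,z_{n+1}),
\]
so $\partial\Omega$ is automatically real-algebraic wherever it is smooth. I will exploit the auxiliary holomorphic map
\[
F(z)=(z_1^m,\dots,z_\ell^m,Q_{\ell+1}(z'),\dots,Q_{N_0+1}(z'))\colon\bC^{n+1}\to\bC^{N_0+1},
\]
which satisfies $\rho=\mathcal{G}(F,F)-1$ for the standard indefinite Hermitian form $\mathcal{G}$ of signature $(\ell,N_0+1-\ell)$ on $\bC^{N_0+1}$; thus $F$ sends $\Omega\cap U_0$ into $\bB^{N_0+1}_\ell$ and $\partial\Omega\cap U_0$ into $Q^{N_0}_\ell$.

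Next I pick $p_0\in\partial\Omega$ with $z_i^0\ne 0$ for $i\le\ell$ and $\mathrm{rk}\,J(z'^0)=n+1-\ell$, where $J=(\partial Q_k/\partial z_j)$; such points form a non-empty open subset by assumption \textbf{(I)}. The differential $dF_{p_0}$ is then injective and $d\rho(p_0)\ne 0$, so $\partial\Omega$ is a smooth real-algebraic hypersurface near $p_0$. Since the image $V:=dF_{p_0}(\bC^{n+1})$ splits as $V=\bC^\ell\times\mathrm{col}(J)$ and contains the whole negative subspace $\bC^\ell\times\{0\}$ of $\mathcal{G}$, the pulled-back form $\mathcal{G}|_V$, which equals the complex Hessian $\partial\bar\partial\rho(p_0)=(dF)^*\mathcal{G}\,(dF)$, is nondegenerate of signature $(\ell,n+1-\ell)$. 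The Levi form at $p_0$ is this Hessian restricted to $T=\ker d\rho$, equivalently $\mathcal{G}$ restricted to $V\cap T_{F(p_0)}Q^{N_0}_\ell\subset\bC^{N_0+1}$. Restricting a Hermitian form of signature $(\ell,n+1-\ell)$ to a hyperplane yields signature $(\ell,n-\ell)$, $(\ell-1,n+1-\ell)$, or a degenerate form, according to whether the scalar
\[
\mathcal{G}(w_0^V,w_0^V)=1-\mathcal{G}(w_0^\perp,w_0^\perp)=1-\|P_{\mathrm{col}(J)^\perp}Q(p_0)\|^2
\]
is positive, negative, or zero; here $w_0=F(p_0)\in Q^{N_0}_\ell$, $w_0^V\in V$ and $w_0^\perp\in V^{\perp_\mathcal{G}}=\{0\}\times\mathrm{col}(J)^\perp$ are the $\mathcal{G}$-orthogonal components of $w_0$, and $\mathcal{G}|_{V^\perp}$ is positive definite of rank $N_0-n$.

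The main obstacle is producing $p_0$ with $\|P_{\mathrm{col}(J)^\perp}Q(p_0)\|^2<1$. I plan to use that $\|Q\|^2$ is a nonconstant real polynomial on $\bC^{n+1-\ell}$ vanishing at the origin by \eqref{e:P=0}; it attains the value $1$ on a non-empty level set, and its regular points (where $d\|Q\|^2\ne 0$, equivalently $Q\notin\mathrm{col}(J)^\perp$, since $\partial_{z_j}\|Q\|^2=\sum_k(\partial Q_k/\partial z_j)\bar Q_k$) are dense. At any such regular point $z'^*$ on $\{\|Q\|^2=1\}$ one has $\|P_{\mathrm{col}(J)^\perp}Q\|^2<\|Q\|^2=1$ strictly; perturbing slightly yields a nearby $z'^0$ with $\|Q(z'^0)\|^2$ just greater than $1$, still of full Jacobian rank and satisfying the strict inequality. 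Since $\ell\ge 1$, I then choose non-zero $z_i^0$ with $\sum_{i=1}^\ell|z_i^0|^{2m}=\|Q(z'^0)\|^2-1>0$ to place $p_0$ on $\partial\Omega$. By openness, the Levi form has signature exactly $(\ell,n-\ell)$ on a relatively open neighborhood of $p_0$. Finally, the $\ell$-concavity of $\Omega$ with respect to this $M$ follows from $\Omega=\{\rho<0\}$ and the matching signatures: applying the implicit function theorem and diagonalizing the Hessian puts $\rho$ in the normal form \eqref{e:Imwrho}, so that $\Omega$ lies on the side \eqref{e:Imw>}.
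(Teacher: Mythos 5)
Your linear--algebraic reformulation is attractive and genuinely different from the paper's proof: you pull back the ambient Hermitian form $\mathcal G$ of signature $(\ell,N_0+1-\ell)$ by the model map $F$ and read off the Levi form of $M$ at $p_0$ as $\mathcal G$ restricted to $V\cap w_0^{\perp_{\mathcal G}}$, whereas the paper obtains the $n-\ell$ positive eigenvalues by slicing in the $\tau$-directions (mapping each slice into a sphere) and the $\ell$ negative eigenvalues by pulling back to a Hartogs-type hypersurface $\sum_i|\zeta_i|^2=q(w,\bar w)$ and invoking plurisubharmonicity of $-\log q$. The trichotomy you derive (signature $(\ell,n-\ell)$, $(\ell-1,n+1-\ell)$, or degenerate, according to the sign of $1-\|P_{W^\perp}Q(p_0)\|^2$ with $W=\mathrm{col}\,J$) is correct, as is the computation of $V$, $V^{\perp_{\mathcal G}}$, and the nonvanishing of $d\rho$ at the points you consider. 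The problem is that the entire proof now hinges on producing one point $z'^0$ with $\mathrm{rank}\,J(z'^0)=n+1-\ell$, $\|Q(z'^0)\|^2>1$, and $\|P_{W^\perp}Q(z'^0)\|^2<1$, and the existence argument you give for it has a genuine gap.

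You propose to start from a regular point $z'^*$ of $\|Q\|^2$ lying on the level set $\{\|Q\|^2=1\}$, justified by ``its regular points are dense.'' Density of the regular points in $\bC^{n+1-\ell}$ is true (the critical locus of the nonconstant real polynomial $\|Q\|^2$ is a proper real-algebraic subset), but it does not produce a regular point on a \emph{prescribed} level set: $1$ may well be a critical value of $\|Q\|^2$, and a priori the whole fiber $\{\|Q\|^2=1\}$ could consist of critical points (compare $x\mapsto x^2$ at the value $0$; nothing you have said excludes the analogous degeneration at the value $1$ for $\sum_k|Q_k|^2$). Ruling this out requires a real argument --- for instance a complexification/multiple-factor analysis of $\sum_kQ_k(z)\overline{Q_k}(w)-1$, or a reduction to a generic line as in the paper's Lemma~\ref{lem:M} --- and none is supplied; this is the missing step. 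A secondary, fixable issue: at such a $z'^*$, regularity of $\|Q\|^2$ only gives $J^*Q\neq0$, hence $\mathrm{rank}\,J\geq1$, not full rank, so your phrase ``still of full Jacobian rank'' after a small perturbation is not a continuity statement; you must perturb into the dense full-rank locus, across which $\mathrm{col}(J)^\perp$ is discontinuous (it shrinks as the rank jumps up), and the persistence of $\|P_{\mathrm{col}(J)^\perp}Q\|^2<1$ then needs the (true but unstated) semicontinuity observation that enlarging $\mathrm{col}(J)$ can only decrease this quantity, up to an error that vanishes as $z'^0\to z'^*$. The paper's slicing argument avoids both difficulties because it never needs a distinguished base point: it establishes the two eigenvalue counts separately, each off a thin subset of the (nonempty, for soft dimension reasons) smooth locus of $\partial\Omega$, and then intersects.
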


\begin{proof} It is easy to see $\Omega$ is non-empty; indeed, the $\ell$-dimensional projective plane $X$ defined by $Z_{\ell+1}=\ldots Z_{n+1}=0$ is contained in $\Omega$ in view of \eqref{e:P=0}. Similarly, it is easy to see that the complement $\Omega^c$ has non-empty interior as well. Thus, the algebraic set $\partial\Omega$, defined by
\begin{equation}\label{e:Pell=0}
-\sum_{i=0}^\ell |Z_i|^{2m}+\sum_{k=\ell+1}^{N_0+1}|P_k(Z_{0},Z_{\ell+1},\ldots,Z_{n+1})|^2=0,
\end{equation}
has a component of real codimension $1$, and therefore a relatively open piece of it is a real-algebraic hypersurface (i.e., nonsingular and of real codimension 1). It remains to show that this hypersurface, denoted $\hat M$, is Levi nondegenerate and of signature $\ell$ outside a proper subvariety $\hat S\subset \hat M$, and that $\Omega$ is $\ell$-concave with respect to $M:=\hat M\setminus \hat S$. Recall that in affine coordinates $z=(z_1,\ldots, z_{n+1})$ in $\bC^{n+1}$ ($=U_0=\{Z_0\neq 0\}$), the hypersurface $M$ is defined by
\begin{equation}\label{e:Pell0=0}
\rho(z,\bar z):=-\sum_{i=1}^\ell |z_i|^{2m}+\sum_{k=\ell+1}^{N_0+1}|Q_k(z_{\ell+1},\ldots,z_{n+1})|^2-1=0,
\end{equation}
where the $Q_k$ are given by \eqref{e:Q}, and $\Omega_0:=\Omega\cap \bC^{n+1}$ is given by $\rho<0$. (More precisely, $M$ is a relatively open subset of the set defined by \eqref{e:Pell0=0}.) We shall show that outside a thin subset of $M$ (henceforth meaning a subvariety of positive codimension in $M$) the Levi form defined by the contact form $\theta:=i\partial \rho$ has $\ell$ negative and $n-\ell$ positive eigenvalues. This will complete the proof of Lemma \ref{lem:M}.

Let us decompose the coordinates in $\bC^{n+1}=\bC^\ell\times \bC^{n+1-\ell}$ as  $z=(\zeta,\tau)$, with $\zeta=(\zeta_1,\ldots,\zeta_\ell)$ and $\tau=(\tau_1,\ldots,\tau_{n+1-\ell})$. For fixed $\zeta^0\in \bC^\ell$, the slice $M_{\zeta^0}:=M\cap\{(\zeta,\tau)\colon \zeta=\zeta^0\}$ is given by an equation of the form
\begin{equation}\label{e:zetaslice}
\sum_{k=\ell+1}^{N_0+1}|Q_k(\tau)|^2=1+C_0,
\end{equation}
where $C_0\geq 0$. Note that the polynomial mapping
$$
\tau\mapsto (Q_{\ell+1}(\tau),\ldots, Q_{N_0+1}(\tau))
$$
sends the slice $M_{\zeta^0}$ into the sphere of radius $\sqrt{1+C_0}$ in $\bC^{N_0+1-\ell}$, and the slice $\Omega_{\zeta^0}:=\Omega_0\cap\{(\zeta,\tau)\colon \zeta=\zeta^0\}$ into the corresponding ball. At a point $p\in M_{\zeta^0}$ where this mapping has full rank (which is outside a thin subset by condition {\bf (I)}), standard Levi form considerations show that $M_{\zeta^0}$ is strictly pseudoconvex at $p$, and we conclude that the Levi form with respect to $i\partial \rho$, restricted to the slice $\{(\zeta,\tau)\colon \zeta=\zeta^0\}$, has $n-\ell$ positive eigenvalues. It follows that the Levi form of $M$ with respect to $\theta=i\partial \rho$ has at least $n-\ell$ positive eigenvalues outside a thin subset.

Next, we choose a linear map $L\colon (\bC,0)\to (\bC^{n+1-\ell},0)$ such that the function
\begin{equation}\label{e:q}
q(w,\bar w):=\sum_{k=\ell+1}^{N_0+1}|Q_k(L(w))|^2-1
\end{equation}
is not constant. Consider the pullbacks $M_L$ and $\Omega_L$ to $V\times\bC$, where $V$ is a simply connected open subset of $(\bC^*)^\ell$ with $\bC^*:=\bC\setminus \{0\}$, of $M$ and $\Omega_0$ via the map
\begin{equation}\label{e:linmap}
(\zeta,w)\mapsto (\zeta_1^{1/m},\ldots \zeta_\ell^{1/m},L(w)).
 \end{equation}
The hypersurface $M_L$ is then given by the equation
\begin{equation}\label{e:M_L}
-\rho_L:=\sum_{i=1}^{\ell}|\zeta_i|^2-q(w,\bar w)=0,
\end{equation}
and $\Omega_L$ is given by $\rho_L<0$. It is well known, and easy to see, that a hypersurface defined by \eqref{e:M_L} is strictly pseudoconvex at a point $(\zeta^0,w_0)$, where $q(w_0,\bar w_0)>0$, if and only if the function $-\log q(w,\bar w)$ is strictly subharmonic near $w_0$. Since $q$ is given by \eqref{e:q}, we conclude (cf.\ Corollary 1.6.8 in \cite{Hormander90}) that $M_L$ is strictly pseudoconvex outside a thin set. Clearly, the Levi form of $M_L$ then has $\ell$ positive eigenvalues with respect to $i\partial(-\rho_L)$, and therefore the Levi form with respect to $i\partial(\rho_L)$ has $\ell$ negative eigenvalues. We conclude that the Levi form of $M$ with respect to $i\partial \rho$ has at least $\ell$ negative eigenvalues outside a thin set. Consequently, outside a thin set the Levi form of $M$ with respect to $i\partial\rho$ has at least $\ell$ negative and at least $n-\ell$ positive eigenvalues. Since the CR dimension of $M$ is $n=\ell+(n-\ell)$, we conclude that, outside a thin set, the Levi form with respect to $i\partial\rho$ has precisely $\ell$ negative and $n-\ell$ positive eigenvalues. This completes the proof of Lemma \ref{lem:M}.
\end{proof}

\section{Proofs of Proposition \ref{prop:main} and Theorem \ref{thm:main}}\label{s:proofs}

In this section, we shall prove Proposition \ref{prop:main} and Theorem \ref{thm:main}.

\begin{proof}[Proof of Proposition $\ref{prop:main}$] For simplicity of notation, we shall use the notation $H$ also for the rational mapping $\bCP^{n+1}\to \bCP^{N+1}$ that extends the proper mapping $\Omega\to \bB^{N+1}_{\ell'}$. After shrinking $M$ and $U$ if necessary, we may assume that $M$ and $U$ are contained in $\bC^{n+1}:=\{[Z_0:\ldots Z_{n+1}]\in \bCP^{n+1}\colon Z_0\neq 0\}$, and that $H|_U$ is a holomorphic mapping $U\to \bC^{N+1}$. Since $H\colon \Omega\to \bB^{N+1}$ is proper, we also have $H|_U(M)\subset Q^N_{\ell'}\cap \bC^{N+1}$. We claim that $H|_U$ is CR transversal (which is the same as transversal in the real sense in this case, since the CR manifolds are hypersurfaces) to $M':=Q^N_{\ell'}\cap \bC^{N+1}$ outside a proper real-analytic subset of $M$. Indeed, this follows directly from Theorem 1.1 in \cite{BER07}: Since $(\ell'-\ell)+(N_0-n)<\ell$ and $\ell\leq n/2$, it is clear that $\ell'\leq n-1$ and therefore (1.2) of this theorem holds (the dictionary between the present paper and \cite{BER07} is that $e(M',p')=\ell'$ and $e_0(M',p')=0$). Moreover, since $H|_U(\Omega\cap U)\subset \bB^{N+1}_{\ell'}$ by assumption, the possibility (ii) in this theorem is excluded. Thus, the conclusion is that (i) of Theorem 1.1 in \cite{BER07} holds, which is precisely the claim above. By again shrinking $M$ and $U$ if necessary, we may assume that $H|_U$ is CR transversal to $Q^N_{\ell'}$ along all of $M$. It now follows from Theorem 1.2(a) in \cite{ESh12} that $H|_U(U)$ is contained in complex plane of dimension $d+1:=N_0+(\ell'-\ell)+1$; indeed, by assumption, $H|_U$ is "side preserving" in the sense of that paper (or $\ell=n/2$), and assumption (i) in Theorem 1.2(a) is also assumed in Proposition \ref{prop:main}. It then also follows that the rational mapping $H$ sends $\bCP^{n+1}$ into a projective plane $\Pi$ of dimension $d+1$ ($=N_0+(\ell'-\ell)+1$). By elementary linear algebra, the Hermitian quadratic form
\begin{equation}\label{e:qform}
-\sum_{i=0}^{\ell'} |Z_i|^2+\sum_{i=\ell'+1}^{N+1}|Z_i|^2
\end{equation}
restricted to a projective plane of dimension $d+1$ is a Hermitian quadratic form with $\ell''+1$ negative eigenvalues, $r$ zero eigenvalues, and $p+1:=d-\ell''-r+1$ positive eigenvalues, where
\begin{equation}\label{e:signest}
\ell''\leq \ell', \quad p\leq N-\ell', \quad r\leq\min(\ell'-\ell'', N-\ell'-p).
\end{equation}
Consequently, we may choose homogeneous coordinates $[\zeta_0:\ldots:\zeta_{d+1}]$ on $\Pi\cong\bCP^{d+1}$ such that the restriction of \eqref{e:qform} to this projective space is given by
\begin{equation}\label{e:degqform}
-\sum_{i=0}^{\ell''}|\zeta_i|^2+\sum_{i=\ell''+r+1}^{d+1}|\zeta_i|^2.
\end{equation}
We now embed $\Pi$ into $\bCP^{d+r+1}$, with coordinates $[Z'_0:\ldots:Z'_{d+r+1}]$, as the projective plane $Z_i=\zeta_i$, for $i=0,\ldots,d+1$, and $Z'_{d+1+j}=\zeta_{\ell''+j}$ for $j=1,\ldots,r$. We observe that the quadratic form \eqref{e:degqform} is then the restriction to $\Pi$ of the {\it nondegenerate} Hermitian form
\begin{equation}\label{e:nondegqform}
-\sum_{i=0}^{\ell''+r} |Z'_i|^2+\sum_{i=\ell''+r+1}^{d+r+1}|Z'_i|^2.
\end{equation}
We note, by \eqref{e:signest}, that
\begin{equation}\label{e:signest2}
\ell''+r\leq \ell',\quad d+r\leq N_0+2(\ell'-\ell).
 \end{equation}
We also note, however, that there is a possibility that $\ell''+r> d-\ell''$, i.e., the number of negative terms in \eqref{e:nondegqform} exceed the number of positive terms. Composing the rational mapping $H$ with the embedding $\Pi\to\bCP^{d+r+1}$ above yields a rational mapping $H'$, clearly with $\deg H'=\deg H$, such that $H'\colon \Omega\to \bB^{d+r+1}_{\ell''+r}$ is a proper holomorphic mapping and $H'$ is transversal to $Q^{d+r+1}_{\ell''+r}$ along $M$. By shrinking $M$ and $U$ again, if necessary, we may assume also that $H^0\colon U\to \bCP^{N_0+1}$ is also transversal to $Q^{N_0}_{\ell}$ along $M$. Let us pick a point $p\in M$ and apply automorphisms $T'_1$ and $T^0_1$ of $\bCP^{d+r+1}$ and $\bCP^{N_0+1}$, respectively, such that $H'(p_0)=0$, $H^0(p_0)=0$, and such that $Q^{d+r}_{\ell''+r}$ and $Q^{N_0}_\ell$ are mapped to $\bH^{d+r}_{\ell''+r}$ and $\bH^{N_0}_\ell$, respectively, where $\bH^s_q$ is the "Heisenberg realization" of $\bB^{s}_q$, given in affine coordinates $(z,w)=(z_1,\ldots,z_s,w)\in \bC^s\times\bC$ by
\begin{equation}\label{e:heisen}
\im w=-\sum_{i=1}^q|z_i|^2+\sum_{i=q+1}^{s}|z_i|^2.
\end{equation}
We can express the two maps $T'_1\circ H'$ and $T^0_1\circ H^0$ as
$$
T'_1\circ H'=(F',G')=(F'_1,\ldots, F'_{d+r},G'),\quad T^0_1\circ H^0=(F^0,G^0)=(F^0_1,\ldots, F^0_{N_0},G^0).
$$
Let us now choose local holomorphic coordinates $(z,w)\in \bC^n\times \bC$, vanishing at $p\in M$, such that $M$ is given by \eqref{e:Imw}, and introduce
\begin{equation}\label{e:dgdw}
\sigma':=\frac{\partial G'}{\partial w}(0,0),\quad
\sigma^0:=\frac{\partial G^0}{\partial w}(0,0).
\end{equation}
It is well known that $\sigma'$ and $\sigma^0$ are real, and that transversality is equivalent to these quantities being nonzero. We may assume, without loss of generality, that $\sigma^0>0$; if $\ell<n/2$, this is necessarily the case, and if $\ell=n/2$ we may replace $w$ by $-w$ to achieve this.

We shall first consider the case where also $\sigma'>0$. (This is necessarily the case if, e.g., $\ell'<n-\ell$; see, e.g., \cite{EHZ05}.) Standard Levi form considerations (e.g., examining the quadratic terms in the equation stating that $(F',G')$ maps $M$ into $\bH^{d+r}_{\ell''+r}$) imply that $\ell\leq \ell''+r$ and $n-\ell\leq d-\ell''$. Let us denote the codimensions of the maps $T'_1\circ H'$ and $T^0_1\circ H^0$ by $k':=d+r-n$ and $k^0:=N_0-n$, respectively. We have, by \eqref{e:signest2} and the assumption $(N_0-n)+(\ell'-\ell)<\ell\leq n/2$ in Proposition \ref{prop:main},
$$
k'+k^0\leq N_0+2(\ell'-\ell)-n+N_0-n=2\left((\ell'-\ell)+(N_0-n)\right)<2\ell\leq n.
$$
It now follows from Theorem 5.3 in \cite{EHZ05} (see also the discussion in the paragraph preceding this theorem; the assumption $\ell_q<n-\ell$ in the theorem is simply to guarantee that $\sigma'\sigma^0=1$, which is already assumed here) that there are formal holomorphic coordinates $(z,w)\in \bC^n\times\bC$, automorphisms $T'_2$ and $T^0_2$ of $\bH^{d+r}_{\ell''+r}$ and $\bH^{N_0}_\ell$, respectively, such that in these coordinates we have
\begin{equation}\label{e:normform1}
(T'_2\circ T'_1\circ H')(z,w)=(z,\phi'(z,w),w),\quad (T^0_2\circ T^0_1\circ H^0)(z,w)=(z,\phi^0(z,w),w),
\end{equation}
where $\phi'=(\phi'_1,\ldots,\phi'_{k'})$ and $\phi^0=(\phi^0_1,\ldots,\phi^0_{k^0})$ are formal power series vanishing to at least second order at $0$. Moreover, we have (see Remark 5.4 following Theorem 5.3)
\begin{equation}\label{e:normform2}
-\sum_{i=1}^{\ell''+r-\ell}|\phi'_i|^2+\sum_{i=\ell''+r-\ell+1}^{k'}|\phi'_i|^2=
\sum_{i=1}^{k^0}|\phi^0_i|^2.
\end{equation}
If we write $\phi'=(\psi, \phi)$ with $\psi=(\phi'_1,\ldots,\phi'_{\ell''+r-\ell})$ and $\phi=(\phi'_{\ell''+r-\ell +1},\ldots, \phi'_{k'})$, then (by also substituting $k':=d+r-n$ and $k^0:=N_0-n$) equation \eqref{e:normform2} can be rewritten as
\begin{equation}\label{e:normform3}
\sum_{i=1}^{(N_0-n)+(\ell'-\ell'')}|\phi_i|^2=\sum_{i=1}^{\ell''+r-\ell}|\psi_i|^2+\sum_{i=1}^{N_0-n}|\phi^0_i|^2,
\end{equation}
which implies that there is a unitary matrix $U$ of the appropriate size such that either
\begin{equation}\label{e:normform1c1}
\phi =(\psi,\phi^0,0,\ldots,0)U,\quad\text{{\rm if $(N_0-n)+(\ell'-\ell'')\geq(N_0-n)+(\ell''+r-\ell)$}},
\end{equation}
or
\begin{equation}\label{e:normform1c2}
(\phi,0\ldots, 0) =(\psi,\phi^0)U,\quad\text{{\rm if $(N_0-n)+(\ell'-\ell'')<(N_0-n)+(\ell''+r-\ell)$}},
\end{equation}
where zero components are added so that the vectors on each side have the same number of components (and, of course, in the case $(N_0-n)+(\ell'-\ell'')=(N_0-n)+(\ell''+r-\ell)$ no zeros need to be added at all). If we now return to an original system of affine coordinates $(u_1,\ldots,u_{n+1})$, with $u_j=Z_j/Z_0$, in $\bC^{n+1}$, we obtain
\begin{equation}\label{e:unravel1}
(T'_2\circ T'_1\circ H')(u)=(f(u),\psi(u),\phi(u),g(u)),\quad (T^0_2\circ T^0_1\circ H^0)(u)=(f(u),\phi^0(u),g(u)),
\end{equation}
where $f(u)=z(u)$, $g(u)=w(u)$, and where, by a slight abuse of notation, we have used $\psi(u)$, $\phi(u)$, $\phi^0(u)$ for $$\psi(z(u),w(u)),\quad  \phi(z(u),w(u)), \quad \phi^0(z(u),w(u)).
$$
Since $T'_2\circ T'_1\circ H'$ is rational it is clear from \eqref{e:normform1c1}, \eqref{e:normform1c2}, and \eqref{e:unravel1} that so is $T^0_2\circ T^0_1\circ H^0$ and, therefore, $H^0$, as claimed in Proposition \ref{prop:main}. It remains to show that $\deg H'=\deg H^0$, which of course is equivalent to $\deg T'_2\circ T'_1\circ H'= \deg T^0_2\circ T^0_1\circ H^0$. Let us write
\begin{equation}\label{e:rat0}
(T^0_2\circ T^0_1\circ H^0)(u)=\frac{(\hat f(u),\hat \phi^0(u),\hat g(u))}{q(u)},
\end{equation}
where $q(u)$, $\hat g(u)$, and the components of $\hat f(u)$, $\hat \phi^0(u)$  are polynomials in $u$, without common (nontrivial) factors. It follows from \eqref{e:normform1c1}, \eqref{e:normform1c2}, and \eqref{e:unravel1} that there is a matrix $A$ such that
\begin{equation}\label{e:rat'}
(T'_2\circ T'_1\circ H')(u)=\left (\frac{\hat f(u)}{q(u)},\psi(u),\left (\psi(u),\frac{\hat \phi^0(u)}{q(u)}\right )A,\frac{\hat g(u)}{q(u)}\right ),
\end{equation}
Let us write $\psi(u)=\hat \psi(u)/p(u)$, where $p(u)$ and the components of $\hat \psi(u)$ are polynomials without common factors. We claim that $p$ divides $q$. Indeed, suppose not. Then we can write $p(u)=r_1(u)s(u)$ and $q(u)=r_2(u)s(u)$, where the polynomial $r_1$ has degree at least 1, and  $r_1$ and $r_2$ are relatively prime. We then have
\begin{equation}\label{e:rat'1}
(T'_2\circ T'_1\circ H')(u)=\frac{\left (\hat f(u)r_1(u),\hat\psi(u)r_2(u),\left (\hat \psi(u)r_2(u),\hat \phi^0(u)r_1(u)\right )A,\hat g(u)r_1(u)\right )}{r_1(u)r_2(u)s(u)},
\end{equation}
where the components in the numerator and the denominator have no common factors. We note that the set of indeterminacy in $\bCP^{n+1}$ of this map contains a set that is defined by the vanishing of $\ell''+r-\ell+1$ polynomials, namely $r_1$ and the components of $\hat\psi$. Recall that $\Omega$ contains an algebraic variety $X$ of dimension $\ell$. Since $\ell''+r\leq \ell'$ and $\ell'-\ell<\ell$, we conclude that the set of indeterminacy of $T'_2\circ T'_1\circ H'$ meets $X$ and therefore this map is not holomorphic in $\Omega$. This is a contradiction, and thus $p$ divides $q$. Let us write $q(u)=r(u)p(u)$ (where of course $r(u)$ could be constant), and obtain
\begin{equation}\label{e:rat'2}
(T'_2\circ T'_1\circ H')(u)=\frac{\left (\hat f(u),\hat\psi(u)r(u),\left (\hat \psi(u)r(u),\hat \phi^0(u)\right )A,\hat g(u)\right )}{q(u)}.
\end{equation}
We now claim that
$$\deg \hat\psi r\leq \max (\deg \hat f, \deg \hat \phi^0, \deg \hat g, \deg q),$$
 which would prove that $\deg H'\leq\deg H^0$. Suppose, in order to reach a contradiction, that one component of $\hat \psi r$ has degree $m+1$, $\deg \hat \psi r\leq m+1$, and $\max (\deg \hat f, \deg \hat \phi^0, \deg \hat g, \deg q)\leq m$. Let us express the rational mapping $T'_2\circ T'_1\circ H'$ in homogeneous coordinates $Z=[Z_0:\ldots:Z_{n+1}]$ on the source space $\bCP^{n+1}$ and $Z'=[Z'_0:\ldots:Z'_{d+r+1}]$ on the target space,
 \begin{multline}\label{e:rat'3}
(T'_2\circ T'_1\circ H')(Z)=\\
\bigg [Z_0^{m+1}q\left(\frac{\hat Z}{Z_0}\right):Z_0^{m+1}\hat f\left(\frac{\hat Z}{Z_0}\right):Z_0^{m+1}(\hat\psi r)\left(\frac{\hat Z}{Z_0}\right):\\
\left (Z_0^{m+1}\hat \psi r\left(\frac{\hat Z}{Z_0}\right),Z_0^{m+1}\hat \phi^0\left(\frac{\hat Z}{Z_0}\right)\right )A:Z_0^{m+1}\hat g\left(\frac{\hat Z}{Z_0}\right)\bigg ],
\end{multline}
where $\hat Z=(Z_1,\ldots, Z_{n+1})$. Since $\max (\deg \hat f, \deg \hat \phi^0, \deg \hat g, \deg q)\leq m$, the set of indeterminacy in homogeneous coordinates contains the set where $Z_0=0$ and
$$
Z_0^{m+1}(\hat\psi r)\left(\frac{\hat Z}{Z_0}\right)=0,
$$
which again has codimension at most $\ell''+r-\ell+1$ in $\bCP^{n+1}$ and, therefore, must meet the $\ell$-dimensional variety $X\subset \Omega$. This is a contradiction, and we conclude that $\deg H'\leq\deg H^0$. To show equality, we note that it is clear from \eqref{e:normform3} that if we also write $\phi(u)=\hat \phi(u)/p(u)=\hat\phi(u)r(u)/q(u)$, then $\max(\deg \hat\psi r,\deg \hat \phi r)\geq\max (\deg\hat \phi^0)$. We deduce that $\deg H'=\deg H^0$.

To complete the proof of Proposition \ref{prop:main} in general, we must also consider the case where $\sigma'<0$. Levi form considerations as above shows that $\ell\leq d-\ell''$ and $n-\ell\leq \ell''+r$. The same argument that leads to Theorem 5.3 in \cite{EHZ05} (i.e., the proofs of Theorems 1.6 and 1.7) in this case shows that there are formal holomorphic coordinates $(z,w)\in \bC^n\times\bC$, automorphisms $T'_2$ and $T^0_2$ of $\bH^{d+r}_{\ell''+r}$ and $\bH^{N_0}_\ell$, respectively, such that in these coordinates we have
\begin{equation}\label{e:normformsig1}
\begin{aligned}
(T'_2\circ T'_1\circ H')(z,w) &=(z_{\ell+1},\ldots,z_{n},z_1,\ldots,z_\ell,\phi'(z,w),-w),\\ (T^0_2\circ T^0_1\circ H^0)(z,w) &=(z,\phi^0(z,w),w),
\end{aligned}
\end{equation}
where as above $\phi'=(\phi'_1,\ldots,\phi'_{k'})$ and $\phi^0=(\phi^0_1,\ldots,\phi^0_{k^0})$ are formal power series vanishing to at least second order at $0$. In this case, we have (see (5.10) in \cite{EHZ05})
\begin{equation}\label{e:normformsig2}
-\left(-\sum_{i=1}^{\ell''+r-\ell}|\phi'_i|^2+\sum_{i=\ell''+r-\ell+1}^{k'}
|\phi'_i|^2\right)=\sum_{i=1}^{k^0}|\phi^0_i|^2.
\end{equation}
If we again write $\phi'=(\psi, \phi)$ with $\psi=(\phi'_1,\ldots,\phi'_{\ell''+r-\ell})$ and $\phi=(\phi'_{\ell''+r-\ell +1},\ldots, \phi'_{k'})$, then \eqref{e:normformsig2} can be rewritten as
\begin{equation}\label{e:normformsig3}
\sum_{i=1}^{\ell''+r-\ell}|\psi_i|^2=\sum_{i=1}^{(N_0-n)+(\ell'-\ell'')}|\phi_i|^2
+\sum_{i=1}^{N_0-n}|\phi^0_i|^2,
\end{equation}
which implies that there is a unitary matrix $U$ of the appropriate size such that either
\begin{equation}\label{e:normform1sigc1}
\psi U=(\phi,\phi^0,0,\ldots,0),\quad\text{{\rm if $\ell''+r=\ell\geq 2(N_0-n)+(\ell'-\ell'')$}},
\end{equation}
or
\begin{equation}\label{e:normform1sigc2}
(\psi,0\ldots, 0) U =(\phi,\phi^0),\quad\text{{\rm if $\ell''+r=\ell\ < 2(N_0-n)+(\ell'-\ell'')$}},
\end{equation}
where as above zero components are added so that the vectors on each side have the same number of components. The proof that $\deg H'=\deg H^0$ is now completely analogous to the case $\sigma'=1$ above. The details are left to the reader.
\end{proof}

We are now in a position to prove Theorem \ref{thm:main}.

\begin{proof}[Proof of Theorem $\ref{thm:main}$] The conclusion that $H$ is rational follows directly from Lemma \ref{lem:main}. The conclusion that $\deg H=m$ will be a consequence of Proposition \ref{prop:main}. To see this, we first note that $\Omega$ contains the $\ell$-dimensional projective plane $X$ defined by $Z_{\ell+1}=\ldots=Z_{n+1}=0$ as a consequence of \eqref{e:P=0} and, hence, condition (i) in Proposition \ref{prop:main} is satisfied. Condition (ii) is the conclusion of Lemma \ref{lem:M}. Finally, as the rational mapping $H^0$ in condition (iii) we may take the mapping
\begin{equation}\label{e:H^0}
Z=[Z_0:\ldots:Z_{n+1}]\mapsto [Z_0^m:\ldots:Z_\ell^m:P_{\ell+1}(Z_0,\ldots,Z_{n+1}):\ldots:
P_{N_0+1}(Z_0,\ldots,Z_{n+1})].
\end{equation}
Since $\deg H^0=m$, the desired conclusion $\deg H=m$ follows from Proposition \ref{prop:main}. This completes the proof of Theorem \ref{thm:main}.
\end{proof}


\def\cprime{$'$}

\end{document}